\newtheorem{thm}{Theorem}
\newtheorem{lem}[thm]{Lemma}
\newtheorem{defn}[thm]{Definition}
\theoremstyle{definition}
\newcommand{\nn}{\mathbb{N}}
\newcommand{\con}{\smallfrown}
\newcommand{\meg}{\geqslant}
\newcommand{\mik}{\leqslant}
\newcommand{\supp}{\mathrm{supp}}
\begin{document}
\title[On colorings of variable words]{On colorings of variable words}

\author{Konstantinos Tyros}

\address{Mathematics Institute, University of Warwick, Coventry, CV4 7AL, UK}
\email{k.tyros@warwick.ac.uk}

\thanks{2000 \textit{Mathematics Subject Classification}: 05D10.}
\thanks{\textit{Key words}: Ramsey Theory, Graham--Rothschild theorem, Hales--Jewett Theorem.}
\thanks{Supported by ERC grant 306493}
\maketitle

\begin{abstract}
  In this note, we prove that the base case of the Graham--Rothschild Theorem, i.e., the one that considers colorings of the ($1$-dimensional) variable words, admits bounds in the class $\mathcal{E}^5$ of Grzegorczyk's hierarchy.
\end{abstract}

\section{Introduction}
The Graham--Rothschild Theorem \cite{GR} is a generalization of the well know Hales--Jewett Theorem that considers colorings of $m$-parameter sets instead of constant words. The best known bounds
for the Graham--Rothschild Theorem are due to S. Shelah \cite{Sh} and belong to the class $\mathcal{E}^6$ of Grzegorczyk's hierarchy. In this note we consider the ``base'' case of the Graham--Rothschild Theorem, that concerns colorings of ($1$-dimensional) variable words.
We obtain bounds for this base case in $\mathcal{E}^5$ of Grzegorczyk's hierarchy.
Although the proof is an appropriate modification of S. Shelah's prood for the Hales--Jewett Theorem,
it is streamline and independent.

The base case of the Graham--Rothschild Theorem is of particular interest, since it 
is the one needed for the proof of the density Hales--Jewett Theorem in \cite{DKT}.
Moreover, it
has as an immediate consequence the finite version of the Carlson--Simpson Theorem on the left variable words and therefore the
finite version of the Halpern--L\"{a}uchli theorem for level products of homogeneous trees (see also \cite{Soc}).

To state the result of this note, we need some pieces of notation.
Let $k$ and $n$ be positive integers. By $[k]$ we denote the set $\{1,...,k\}$ and $[k]^n$ the set of all sequences $(a_0,...,a_{n-1})$ of length $n$ taking values in $[k]$. We view $[k]$ as a finite alphabet and the elements of $[k]^n$ as words. Thus, by the term \emph{word over $k$ of length $n$} we mean an element of $[k]^n$. Also let $m$ be a positive integer and  $v,v_0,...,v_{m-1}$ distinct symbols not belonging to $[k]$. We view these symbols as variables. A variable word $w(v)$ over $k$ is a sequence in $[k]\cup\{v\}$, where the variable $v$ occurs at least once. More generally, an $m$-dimensional variable word $w(v_0,...,v_{m-1})$ over $k$ is a sequence in $[k]\cup\{v_0,...,v_{m-1}\}$ such that each $v_j$ occurs at least once and they are in block position, meaning that if $w(v_0,...,v_{m-1})$ is of the form $(x_0,...,x_{n-1})$ then $\max \{i:x_i=v_j\}<\min\{i:x_i=v_{j+1}\}$ for all $0\mik j<m-1$.
Clearly, every variable word can be viewed as an $1$-dimensional variable word.

Let $k,m$ be positive integers and $w(v_0,...,v_{m-1})$ an $m$-dimensional variable word over $k$. For every sequence of symbols $\mathbf{x}=(x_i)_{i=0}^{m-1}$ of length $m$ we denote by $w(\mathbf{x})$ the sequence resulting by substituting each occurrence of $v_i$ by $x_i$ for all $0\mik i<m$. Observe that $w(\mathbf{x})$ is an $m'$-dimensional variable word, for some $m'\mik m$, if and only if $\mathbf{x}$ is an $m'$-dimensional variable word. In particular, $w(\mathbf{x})$ is a variable word if and only if $\mathbf{x}$ is a variable word. An $m'$-dimensional (resp. single) variable word is called reduced by $w(v_0,...,v_{m-1})$ if it is of the form $w(\mathbf{x})$ for some $m$-dimensional (resp. single) variable word $\mathbf{x}$ of length $m$.

\begin{thm}
  \label{gr_base_case} For every triple of positive integers $k,r,m$ there exists a positive integer $n_0$ with the following property. For every integer $n$ with $n\meg n_0$ and every $r$-coloring of all the variable words over $k$ of length $n$, there exists an $m$-dimensional variable word $w(v_0,...,v_{m-1})$ over $k$ of length $n$ such that the set of all variable words over $k$ reduced by $w(v_0,...,v_{m-1})$ is monochromatic. We denote the least such $n_0$ by $GR(k,m,r)$.

  Moreover, the numbers $GR(k,m,r)$ are upper bounded by a primitive recursive function belonging to the class $\mathcal{E}^5$ of Grzegorczyk's hierarchy.
\end{thm}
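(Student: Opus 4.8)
The plan is to adapt Shelah's proof of the Hales--Jewett theorem to the space of variable words, running an induction on the alphabet size $k$ whose engine is a Shelah-style ``cube'' (pigeonhole) lemma. First I would reformulate the problem: a variable word over $k$ of length $n$ is exactly a word in $([k]\cup\{v\})^n$ in which $v$ occurs, with $v$ being the symbol that ``moves''. Given an $r$-coloring $c$ of these, producing the desired $w(v_0,\dots,v_{m-1})$ amounts to producing an $m$-dimensional variable word for which the derived coloring $\mathbf{x}\mapsto c(w(\mathbf{x}))$ is constant as $\mathbf{x}$ ranges over all single variable words of length $m$ (equivalently, over all $2^m-1$ combinatorial lines of $[k]^m$). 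The basic operation for assembling the final subspace in stages is composition of parameter words: if $w^\ast$ is $M$-dimensional of length $n$ and $w'$ is $m$-dimensional of length $M$, then $w''=w^\ast\circ w'$ is $m$-dimensional of length $n$ and $w''(\mathbf{x})=w^\ast(w'(\mathbf{x}))$.

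The engine is a \emph{Cube Lemma}: for every $k,r,d$ and every pair of distinct symbols $\alpha,\beta\in[k]\cup\{v\}$ there is a length $L$ such that every $r$-coloring $c$ of the variable words of length $L$ over $k$ admits a $d$-dimensional variable word $u$ whose derived coloring $\mathbf{z}\mapsto c(u(\mathbf{z}))$ is unchanged whenever one switches a single coordinate of $\mathbf{z}$ between $\alpha$ and $\beta$, uniformly in the remaining coordinates and keeping $\mathbf{z}$ a genuine variable word. I would prove it by partitioning the length into consecutive blocks $B_1,\dots,B_d$ and fixing the structure of $u$ one block at a time, say from right to left, always retaining a ``live'' block carrying $v$ so that every test word stays a legitimate variable word. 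To fix $B_j$, I would consider, for each choice of an internal substructure of $B_j$, the induced \emph{color profile} over all settings of the already-unprocessed coordinates (the ``environment''); since this profile lives in $[r]^{(\text{environment})}$, a block long enough to exceed the number of such profiles forces, by pigeonhole, two nested substructures with identical profiles, yielding an $\alpha\leftrightarrow\beta$ switch in $B_j$ that is color-invariant \emph{uniformly} over the environment. Chaining single-coordinate switches then gives invariance under switching arbitrary sets of coordinates, which is what later steps require.

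With the Cube Lemma in hand, the induction on $k$ runs as follows. For $k\meg 2$, apply the Cube Lemma with the pair $(k-1,k)$ and target dimension $M=GR(k-1,m,r)$ to obtain a switch-invariant $w^\ast$ of dimension $M$. Restricting the derived coloring $c^\ast(\mathbf{z})=c(w^\ast(\mathbf{z}))$ to variable words over $[k-1]$ gives an $r$-coloring to which the inductive hypothesis applies, producing an $m$-dimensional $w'$ of length $M$ over $[k-1]$ making all reduced words monochromatic over $[k-1]$. Set $w''=w^\ast\circ w'$. For a variable word $\mathbf{y}$ of length $m$ over the full alphabet $[k]$, each coordinate of $\mathbf{y}$ equal to $k$ can be switched down to $k-1$: through $w'$ this switches a whole block of coordinates of $w'(\mathbf{y})$ from $k$ to $k-1$, hence through $w^\ast$ it leaves $c(w''(\mathbf{y}))$ unchanged by the switch-invariance of $c^\ast$. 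Thus monochromaticity extends from $[k-1]$ to all of $[k]$, as required. The base case $k=1$ is itself a single application of the Cube Lemma, now with the pair $(1,v)$ and target dimension $m$: once switching $1\leftrightarrow v$ is color-invariant, any two substitutions are connected by such switches (through intermediate words that always retain a copy of $v$), so all reduced words share one color.

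The main obstacle is the Cube Lemma, and within it the \emph{uniform} switch-invariance: one must run the pigeonhole against the exponentially many environments at once, respect the constraint that $v$ must remain present in every test word, and arrange single-block invariance so that it chains to the multi-block invariance needed for the promotion step after composition. The second delicate point is the quantitative bookkeeping. The per-block pigeonhole contributes a single exponential, so processing $d$ blocks makes the Cube Lemma length roughly a tower of height $d$ (tetrational growth); the induction on $k$ feeds $d\approx GR(k-1,m,r)$ blocks into the Cube Lemma at each step, so $GR(k,m,r)$ is obtained by iterating a tetration-type operator through $k$ stages. Tracking the constants and invoking the standard closure properties of the Grzegorczyk classes should place the resulting function in $\mathcal{E}^5$: it does not collapse to the Hales--Jewett level because the $2^m-1$ reduced words over $[k]^m$ must be controlled simultaneously, and it does not rise to the full Graham--Rothschild level $\mathcal{E}^6$ because a single induction on $k$ (with $m$ merely carried as a parameter) suffices in place of a double induction on dimension and alphabet. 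Verifying this placement rigorously, by exhibiting explicit $\mathcal{E}^5$ majorants for the recursion, is the final piece of work.
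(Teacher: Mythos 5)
Your inductive step is essentially the paper's own: your Cube Lemma for a pair of \emph{letters} $(k-1,k)$ is exactly the paper's insensitivity lemma (Lemma \ref{insensitivity}), proved by the same block-by-block pigeonhole over color profiles, and your promotion argument (compose the switch-invariant word with the word supplied by the inductive hypothesis, then push every occurrence of $k$ down to $k-1$ by chained single-coordinate switches) is precisely the paper's proof of the inequality $\mathrm{GR}(k+1,m,r)\mik \mathrm{Sh}_v(k,\mathrm{GR}(k,m,r),r)$, with the same bookkeeping (a tower per lemma, iterated $k$ times) yielding $\mathcal{E}^5$.

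The genuine gap is your base case $k=1$. You claim the Cube Lemma also holds when one of the two switched symbols is the variable itself, i.e.\ for the pair $(1,v)$, and that the same pigeonhole proves it. It does not. In the pigeonhole step, the two block settings that get identified are $\mathbf{a}_{t_1}=(\alpha^{t_1},\beta^{D-t_1})$ and $\mathbf{a}_{t_2}=(\alpha^{t_2},\beta^{D-t_2})$ with $t_1<t_2$, and the corresponding block of the constructed word is $\alpha^{t_1}\,v_j^{t_2-t_1}\,\beta^{D-t_2}$: the positions outside the new wildcard set are \emph{frozen} at $\alpha$ and $\beta$. When $\{\alpha,\beta\}\subseteq[k]$ this is harmless, but when $\alpha=v$ (resp.\ $\beta=v$) the constructed object contains fixed occurrences of the variable $v$ unless the pigeonhole happens to return $t_1=0$ (resp.\ $t_2=D$), which nothing forces. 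Such an object is not an $m$-dimensional variable word over $[1]$; its substitution instances form an ``affine'' family $\big\{F\cup\bigcup_{i\in T}B_i : T\subseteq\{0,\dots,m-1\}\big\}$ with a fixed nonempty set $F$, not the family $\big\{\bigcup_{i\in T}B_i : T\neq\emptyset\big\}$ of nonempty unions of a block sequence that the theorem demands for $k=1$. Since every member of the affine family contains $F$, no genuine block sequence can be extracted from it: this is exactly the gap between Hilbert's cube lemma (which pigeonhole does give, with these bounds) and the finite unions theorem of Folkman/Hindman (which it does not). The $k=1$ case of the theorem \emph{is} the finite unions theorem, and the paper accordingly imports it as Theorem \ref{Hindman_fin}, with $\mathcal{E}^4$ bounds coming from Taylor's disjoint unions theorem together with Ramsey's theorem --- an ingredient genuinely beyond the pigeonhole method. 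To repair your proof you must replace your base case by such an appeal; your inductive step can then stand as written.
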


\section{The Hindman Theorem}
The case ``$k=1$'' of Theorem \ref{gr_base_case} follows by the finite version of Hindman's theorem \cite{H}. To state it we need some pieces of notation.

Let $n,m,d$ be positive integers with $d\mik m\mik n$.
We denote by $\mathcal{F}(n)$ the set of all non-empty subsets of $\{0,...,n-1\}$.
A finite sequence $\mathbf{s}=(s_i)_{i=0}^{m-1}$ in $\mathcal{F}(n)$ is called block if $\max s_i<\min s_{i+1}$
for all $0\mik i<m-1$. We denote the set of all block sequences of length $m$ in $\mathcal{F}(n)$ by $\mathrm{Block}^m(n)$.
For every $\mathbf{s}=(s_i)_{i=0}^{m-1}$ in $\mathrm{Block}^m(n)$ we define the set of nonempty unions of $\mathbf{s}$ to be
\[\mathrm{NU}(\mathbf{s})=\Big\{\bigcup_{i\in t}s_i:t\;\text{is a nonempty subset of}\;\{0,...,m-1\}\Big\}.\]
We say that a block sequence $\mathbf{t}=(t_i)_{i=0}^{d-1}$ in $\mathcal{F}(n)$ is a block subsequence of $\mathbf{s}$ if $t_i\in\mathrm{NU}(\mathbf{s})$ for all $0\mik i<d$.
The finite version of Hindman's theorem is stated as follows.
\begin{thm}
  \label{Hindman_fin}
  For every pair $m,r$ of positive integers, there
exists a positive integer $n_0$ with the following property. For every finite block sequence $\mathbf{s}$ of nonempty
finite subsets of $\nn$
of length at least $n_0$ and every coloring of the set $\mathrm{NU}(\mathbf{s})$ with $r$ colors, there exists a block subsequence $\mathbf{t}$ of $\mathbf{s}$ of length $m$
such that the set $\mathrm{NU}(\mathbf{t})$ is monochromatic.
We denote the least $n_0$ satisfying the above property by $\mathrm{H}(m,r)$.

  Moreover, the numbers $\mathrm{H}(m,r)$ are upper bounded by a primitive recursive function belonging to the class $\mathcal{E}^4$ of Grzegorczyk's hierarchy.
\end{thm}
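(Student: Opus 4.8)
The plan is to restate Theorem~\ref{Hindman_fin} as a partition statement about the power set $\mathcal F(N)$ and then to produce the required block subsequence by an explicit finite construction, since the usual deduction from the infinite Hindman theorem via compactness gives no information on $n_0$. First I would reduce to the case $\mathbf s=(\{0\},\dots,\{N-1\})$: for an arbitrary block sequence of length $N$ the map $t\mapsto\bigcup_{i\in t}s_i$ is a bijection of $\mathcal F(N)$ onto $\mathrm{NU}(\mathbf s)$ carrying each $(t_0,\dots,t_{m-1})\in\mathrm{Block}^m(N)$ to a length-$m$ block subsequence of $\mathbf s$ and commuting with nonempty unions. Pulling an $r$-coloring of $\mathrm{NU}(\mathbf s)$ back along it, it suffices to show that for all large $N$ every $r$-coloring $c$ of $\mathcal F(N)$ admits $(t_0,\dots,t_{m-1})\in\mathrm{Block}^m(N)$ with $\mathrm{NU}(t_0,\dots,t_{m-1})$ monochromatic; monotonicity in $N$ (restrict to $\{0,\dots,N_0-1\}$) then makes $\mathrm H(m,r)$ well defined.

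The combinatorial heart is a finite analogue of the idempotent ($p+p=p$) step of the Galvin--Glazer proof, which I would isolate as follows. \textbf{Head Lemma:} for suitable $N$, any $r$-coloring $c$ of $\mathcal F(N)$ admits a block set $u$ together with a block sequence $(b_0,\dots,b_{q-1})$ lying entirely above $u$ and with $q$ as large as desired, such that $c(u\cup B)=c(B)$ for every $B\in\mathrm{NU}(b_0,\dots,b_{q-1})$; that is, prepending $u$ is color-preserving on the whole reservoir. Applying the Head Lemma repeatedly inside the successive reservoirs I obtain a single block sequence $u_0<u_1<\dots<u_{K-1}$ with the property that, for each $i$, prepending $u_i$ preserves $c$ on every nonempty union of the later blocks $u_{i+1},\dots,u_{K-1}$.

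Granting this, the endgame is a short pigeonhole. Taking $K=r(m-1)+1$, some $m$ of the heads, say $u_{j_0}<\dots<u_{j_{m-1}}$, share a common color $\chi$. For any nonempty $S=\{j_{a_1}<\dots<j_{a_l}\}$ the color-preservation strips the blocks off from the bottom, giving $c\left(\bigcup_{t}u_{j_{a_t}}\right)=c\left(\bigcup_{t\meg 2}u_{j_{a_t}}\right)=\dots=c(u_{j_{a_l}})=\chi$, since each prepended head lies below the union of those remaining. Hence $\mathrm{NU}(u_{j_0},\dots,u_{j_{m-1}})$ is monochromatic of color $\chi$, which is exactly the desired length-$m$ block subsequence.

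I expect the Head Lemma, and specifically its quantitative form, to be the real obstacle; everything else is bookkeeping. The danger is that the natural way to manufacture $u$ --- refine $c$ to the coloring $B\mapsto\big(c(B),c(u\cup B)\big)$ and iterate --- squares the number of colors at every stage and, compounded over the $K\approx rm$ applications, drives the bound up through Grzegorczyk's hierarchy. The point of the argument must therefore be to produce the color-preserving head by a pigeonhole over \emph{color patterns} on a bounded reservoir while keeping the palette fixed at $r$, so that the ambient size needed to extract one head is only \emph{elementary} (single-exponential) in the reservoir size. Iterating such a step $K$ times then yields a tower of height controlled by $m$ and $r$, and verifying that the resulting numerical recursion is majorized by a fixed finite iterate of exponentiation --- hence lies in $\mathcal E^4$ --- is the delicate part; the individual pigeonhole counts are routine.
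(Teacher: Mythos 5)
Your skeleton --- reduce to colorings of $\mathcal{F}(N)$, extract ``heads'' with a color-preservation property, then pigeonhole on the colors of the heads --- is the standard finitary Galvin--Glazer outline, and your endgame is sound: since each later head is a union of blocks of every earlier reservoir, the preservation property of $u_i$ does apply to every nonempty union of later heads, and with $K=r(m-1)+1$ the stripping argument shows each union has the color of its topmost head. The genuine gap is exactly where you feared: the Head Lemma is not proved, and the mechanism you propose for it does not yield it. Pigeonholing over color patterns with the palette fixed at $r$ gives the following: among the initial unions $P_t=a_0\cup\dots\cup a_{t-1}$, $t=0,\dots,d$, of a block sequence, if $d\geq r^{2^q-1}$ then two of them, say $P_s\subsetneq P_t$, have equal traces on the reservoir, i.e. $c(P_s\cup B)=c(P_t\cup B)$ for all $B\in\mathrm{NU}(b_0,\dots,b_{q-1})$. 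Writing $u=P_t\setminus P_s$, this yields only the \emph{conditional} statement $c(P_s\cup u\cup B)=c(P_s\cup B)$: prepending $u$ preserves colors of sets that already contain the prefix $P_s$. Your Head Lemma needs the unconditional $c(u\cup B)=c(B)$, which requires the matched pair to include $t=0$ (the empty trace), and pigeonhole cannot force that. This is precisely the difference between Hales--Jewett-type statements, where Shelah-style insensitivity works because every element of a combinatorial subspace contains the constant part, and Hindman-type statements, where $\mathrm{NU}(\mathbf{t})$ consists of pure unions and no constant prefix can be smuggled in. Trying to bootstrap the conditional version into the unconditional one is exactly what forces the refined coloring $B\mapsto\big(c(B),c(u\cup B)\big)$ and the color blow-up you wanted to avoid.

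Note also that your Head Lemma is not an intermediate step but is equivalent to the theorem: applying Theorem \ref{Hindman_fin} with parameters $(q+1,r)$ and taking $u=t_0$ with reservoir $(t_1,\dots,t_q)$ gives the Head Lemma, and your iteration gives the converse. So the proposal relocates, rather than reduces, the difficulty; a single-exponential-cost Head Lemma would indeed recover an $\mathcal{E}^4$ bound by the tower computation you sketch, but no proof of such a lemma is offered or known to me. For comparison, the paper does not prove this statement from scratch either: it derives it from Taylor's disjoint union theorem \cite{Tay2} together with Ramsey's theorem with the Erd\H{o}s--Rado bounds \cite{ER}, both available in $\mathcal{E}^4$, deferring the details of the block version to \cite{DK}. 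That route handles the constant-prefix obstruction structurally (via Hales--Jewett/Graham--Rothschild-type induction) rather than by a one-shot pigeonhole, and it is the step you would need to replace the Head Lemma with to make your argument complete.
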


This finite version follows by the disjoint union theorem \cite{GR, Tay2} and Ramsey's theorem.
The bounds for the disjoint union theorem given in \cite{Tay2}, as well as, the bound for the Ramsey numbers given in \cite{ER} are in $\mathcal{E}^4$. Using these bounds, one can see that the
numbers $\mathrm{H}(m,r)$ are upper bounded by a primitive recursive function belonging to the class $\mathcal{E}^4$ of Grzegorczyk's hierarchy. We refer the interested reader to \cite{DK}.

\section{Insensitivity}
The proof of Theorem \ref{gr_base_case} proceeds by induction on $k$. The main notion that helps us to carry out the inductive step of the proof is an appropriate modification of Shelah's insensitivity (see Definition \ref{defn_ins} below).

First, let us introduce some additional notation.
Let $k,m,n$ be positive integers with $m\mik n$ and $w(v_0,...,v_{m-1})$ be an $m$-dimensional variable word over $k$ of length $n$.
We denote by $W^k_v(n)$ the set of all variable words over $k$ of length $n$, while by
$W^k_v\big(w(v_0,...,v_{m-1})\big)$ the set of all variable words over $k$ reduced by $w(v_0,...,v_{m-1})$. If $w=w(v_0,...,v_{m-1})=(x_i)_{i=0}^{n-1}$, for every $j=0,...,m-1$ we set \[\mathrm{supp}_{w}(v_j)=\{i\in\{0,...,n-1\}:x_i=v_j\}.\]
We consider the following analogue of Shelah's insensitivity.
\begin{defn}
  \label{defn_ins}
  Let $k,m,n$ be positive integers with $m\mik n$. Also let
  $w(v_0,...,v_{m-1})$ be an $m$-dimensional variable word over $k+1$ of length $n$ and $a,b$ in $[k+1]$ with $a\neq b$.
  \begin{enumerate}
    \item[(i)] We say that two  words $\mathbf{x}=(x_i)_{i=0}^{n-1}$ and $\mathbf{y}=(y_i)_{i=0}^{n-1}$ over $k+1$ of length $n$ are $(a,b)$-equivalent if for every $e$ in $[k+1]\setminus\{a,b\}$, we have that $x_i=e$ if and only if $y_i=e$ for all $i$ in
        $\{0,...,n-1\}$.

    \item[(ii)] We say that a coloring $c$ of $W^{k+1}_v(n)$ is $(a,b)$-insensitive over $w(v_0,...,v_{m-1})$ if for every pair $\mathbf{x},\mathbf{y}$ of $(a,b)$-equivalent words over $k+1$ of length $m$, we have that $c\big(w(\mathbf{x})\big)=c\big(w(\mathbf{y})\big)$.
\end{enumerate}
\end{defn}
We prove the following analogue of Shelah's insensitivity lemma.
\begin{lem}
  \label{insensitivity}
   For every triple $k,m,r$ of positive integers there exists a
   positive integer $n_0$ satisfying the following. For every integer $n$ with $n\meg n_0$, every  $a,b$ in $[k+1]$ with $a\neq b$ and every $r$-coloring $c$ of $W_v^{k+1}(n)$ there exists an $m$-dimensional variable word $w(v_0,...,v_{m-1})$ over $k+1$ of length $n$ such that $c$ is $(a,b)$-insensitive over $w(v_0,...,v_{m-1})$.
   We denote the least such $n_0$ by $\mathrm{Sh}_v(k,m,r)$.

  Finally, the numbers $\mathrm{Sh}_v(k,m,r)$ are upper bounded by a primitive recursive function belonging to the class $\mathcal{E}^4$ of Grzegorczyk's hierarchy.
\end{lem}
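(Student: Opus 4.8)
The plan is to reduce the lemma to the finite Hindman theorem (Theorem~\ref{Hindman_fin}), whose bounds already lie in $\mathcal{E}^4$, by a single application to a colouring with boundedly many colours; this is exactly why the resulting bound stays in $\mathcal{E}^4$. I begin with a purely combinatorial reduction. If $\mathbf{x}$ and $\mathbf{y}$ are $(a,b)$-equivalent variable words of length $m$, then, treating the variable as a symbol outside $\{a,b\}$, they carry the variable and every symbol of $[k+1]\setminus\{a,b\}$ in exactly the same positions, while the positions at which they take values in $\{a,b\}$ form one common set $P$. Hence one can pass from $\mathbf{x}$ to $\mathbf{y}$ by flipping the coordinates in $P$ between $a$ and $b$ one at a time, each intermediate word being again a legitimate variable word of length $m$. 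It therefore suffices to produce an $m$-dimensional variable word $w=w(v_0,\dots,v_{m-1})$ such that, for every variable word $\mathbf{x}$ of length $m$ and every coordinate $j$ at which $\mathbf{x}$ takes a value in $\{a,b\}$, replacing that value by the other element of $\{a,b\}$ leaves $c\big(w(\mathbf{x})\big)$ unchanged; the insensitivity required in Definition~\ref{defn_ins} then follows by telescoping.

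Next I encode this single-flip invariance as a monochromaticity statement about nonempty unions. Fix a block sequence $\mathbf{s}=(s_i)_{i=0}^{N-1}$ of singletons inside $\{0,\dots,n-1\}$, with $N$ chosen below, and let all remaining positions carry a fixed background letter, so that every word built below is a genuine variable word of length $n$. The eventual variable blocks $\mathrm{supp}_w(v_j)$ will be members of $\mathrm{NU}(\mathbf{s})$, and a single $a\leftrightarrow b$ flip of one such block will correspond to adjoining or deleting it from the region coloured $a$, all other blocks held fixed. To let one monochromaticity conclusion control every flip against every possible environment simultaneously, I colour a union $u\in\mathrm{NU}(\mathbf{s})$ by the tuple recording, for each \emph{frame}—each assignment of the other coordinates to a symbol of $[k+1]\cup\{v\}$ (with at least one $v$ present, so that the test word is a variable word)—the colour $c$ of the test word that places $a$ on $u$ and completes the remaining coordinate-blocks according to the frame. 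There are at most $(k+2)^{m}$ frames, so this is a colouring $\chi$ with at most $R:=r^{(k+2)^{m}}$ colours, a number depending only on $k,m,r$ and not on $n$.

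I then apply Theorem~\ref{Hindman_fin} with parameters $m$ and $R$: taking $N=\mathrm{H}(m,R)$ yields a block subsequence $\mathbf{t}=(t_j)_{j=0}^{m-1}$ of $\mathbf{s}$ with $\mathrm{NU}(\mathbf{t})$ $\chi$-monochromatic. I define $w$ by setting $\mathrm{supp}_w(v_j)=t_j$ and retaining the background letter elsewhere. Reading off the monochromaticity of $\mathrm{NU}(\mathbf{t})$ frame by frame yields exactly the single-flip invariance isolated above—the two unions differing by the single block $t_j$ receive equal $\chi$-entries in every frame—and the telescoping step upgrades this to $(a,b)$-insensitivity of $c$ over $w$. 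When $n>\mathrm{H}(m,R)$ one runs the same construction on an initial segment of $\{0,\dots,n-1\}$ and pads the remainder with the background letter, so $\mathrm{Sh}_v(k,m,r)\mik \mathrm{H}(m,R)$. Since $R=r^{(k+2)^{m}}$ is elementary in $(k,m,r)$ and $\mathcal{E}^4$ is closed under composition with elementary functions, Theorem~\ref{Hindman_fin} gives that $\mathrm{H}(m,R)$, and hence $\mathrm{Sh}_v(k,m,r)$, is bounded by a function in $\mathcal{E}^4$.

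I expect the genuine difficulty to lie in the middle step: defining the finite auxiliary colouring $\chi$ \emph{intrinsically} on $\mathrm{NU}(\mathbf{s})$—before the subsequence $\mathbf{t}$ is known—so that a single monochromatic-unions conclusion simultaneously governs all $m$ coordinates and all background frames, and checking that the union operation on the $\mathbf{t}$-blocks faithfully mirrors the $a\leftrightarrow b$ flip once $v_0,\dots,v_{m-1}$ are identified with $t_0,\dots,t_{m-1}$. Once this correspondence is set up correctly, the telescoping reduction of the first paragraph and the Grzegorczyk-class bookkeeping of the third are routine.
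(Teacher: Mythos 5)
Your opening reduction (telescoping single $a\leftrightarrow b$ flips, with the variable positions held fixed) is correct, and it is in fact the same observation the paper uses at the very end of its argument (the step ``one can easily see that \eqref{eq03} implies \dots''). The Grzegorczyk bookkeeping in your last paragraph would also be fine. But the middle step --- the only step with real content --- is not a proof, and it cannot be repaired in the form you describe. Your colouring $\chi$ of $\mathrm{NU}(\mathbf{s})$ is supposed to record, for each ``frame'', the $c$-colour of the test word placing $a$ on the union $u$ and the frame symbols on ``the other coordinates''. The other coordinates of what? The frames that matter are assignments of symbols to the \emph{other blocks of the final word} $w$, i.e.\ to the blocks $t_{j'}$, $j'\neq j$, of the block subsequence $\mathbf{t}$ --- but $\mathbf{t}$ does not exist yet when $\chi$ must be defined, and Hindman's theorem gives you no control over which unions of $\mathbf{s}$ will end up as the blocks of $\mathbf{t}$. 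If instead you let a frame be an assignment to all singletons of $\mathbf{s}$ outside $u$, then the number of frames is roughly $(k+2)^{N-|u|}$, so $\chi$ has a number of colours growing with $N$, and the application of Theorem \ref{Hindman_fin} (which needs $N\meg \mathrm{H}(m,R)$ with $R$ the number of colours) collapses. There is also a structural mismatch even at the level of indexing: a flip moves the block $t_j$ from being \emph{part of} $u$ to being a \emph{frame coordinate} carrying $b$, so the two test words you want to equate appear as entries of $\chi(u)$ and $\chi(u\setminus t_j)$ indexed by \emph{different} frames; monochromaticity of $\mathrm{NU}(\mathbf{t})$ only gives equality of the tuples entry-by-entry under one common indexing, which is exactly what you do not have. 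You flag this definability problem yourself as ``the genuine difficulty'' and leave it open --- but that difficulty \emph{is} the lemma; everything you actually prove is the routine part.

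It is worth seeing how the paper resolves precisely this point, because the resolution is what forces the shape (and the size) of the bound. The paper does not use Hindman's theorem in Lemma \ref{insensitivity} at all (Theorem \ref{Hindman_fin} enters only as the $k=1$ base case of Theorem \ref{gr_base_case}). Instead it runs Shelah's iterated pigeonhole: one coordinate at a time, it compares the maps $T_t:\mathbf{z}\mapsto c\big(w_{j-1}(Q(\mathbf{a}_t,\mathbf{z}))\big)$, i.e.\ it colours the ``ramps'' $\mathbf{a}_t$ by the \emph{entire function} of the environment $\mathbf{z}$, rather than trying to fix the environment in advance. Pigeonholing over the $d=r^{(k+2)^{q_j-1}}$ possible functions produces $t_1<t_2$ with $T_{t_1}=T_{t_2}$, and the positions between $t_1$ and $t_2$ become the support of the new variable $v_{j-1}$; insensitivity at that coordinate then holds \emph{uniformly in the environment} by construction. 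The price is that the environment at step $j$ has dimension $q_j-1$ (not $m-1$), which is why the number of colours at each step is $r^{(k+2)^{q_j-1}}$ and why the bound is the recursively defined $f(k,m,m,r)$ rather than a single $\mathrm{H}\big(m,r^{(k+2)^m}\big)$. Any attempt like yours to handle all coordinates and all frames with one application of the unions theorem and only $r^{(k+2)^m}$ colours founders on the fact that a colouring of single sets cannot encode the joint dependence of $c$ on a set together with data living on blocks that are not determined by that set.
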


Before we proceed to the proof of Lemma \ref{insensitivity} let us define a function $f:\nn^4\to\nn$ by the following rule. For every choice of positive integers $k,m,r$ we recursively define
\[
\left\{ \begin{array} {l} f(k,0,m,r)=0,\\
                               f(k,j+1,m,r)=f(k,j,m,r)+r^{ (k+2)^{m-j-1+f(k,j,m,r)}}\end{array}  \right.
\]
and we set $f(k,j,m,r)=0$ if at least one of the integers $k,m,r$ is equal to zero.
Observe that $f$ belongs to the class $\mathcal{E}^4$ of Grzegorczyk's hierarchy.
\begin{proof}
  [Proof of Lemma \ref{insensitivity}]
  Let $k,m,r$ of positive integers.
  We will show the inequality
  \begin{equation}
    \label{eq01}
    \mathrm{Sh}_v(k,m,r)\mik f(k,m,m,r).
  \end{equation}
  Indeed, let $n$ be an integer with $n\meg f(k,m,m,r)$ and $c:W_v(n)\to\{1,...,r\}$.
  Also let $a,b$ in $[k+1]$ with $a\neq b$.
  Set
  \[q_j=f(k,m-j,m,r)+j\]
  for all $0\mik j\mik m$. We inductively construct a sequence $(w_j)_{j=0}^{m}$
  satisfying for every $j=0,...,m$ the following.
  \begin{enumerate}
    \item[(i)] $w_j$ is a $q_j$-dimensional variable word over $k+1$ of length $n$.
    \item[(ii)] If $0<j$, then $w_j$ is reduced by $w_{j-1}$.
    \item[(iii)] If $1<j$, then $\mathrm{supp}_{w_j}(v_{j-2})=\mathrm{supp}_{w_{j-1}}(v_{j-2})$.
    \item[(iv)] If $0<j$, then for every $\mathbf{x}=(x_i)_{i=0}^{q_j-1},\mathbf{y}=(y_i)_{i=0}^{q_j-1}$ in $W^{k+1}_v(q_j)$ such that
        \begin{enumerate}
          \item[(1)] $x_i=y_i$ for all $i=0,...,q_j-1$ with $i\neq j-1$ and
          \item[(2)] $x_{j-1}=a$ and $y_{j-1}=b$,
        \end{enumerate}
        we have that $c\big(w_{j}(\mathbf{x})\big)=c\big(w_{j}(\mathbf{y})\big)$.
  \end{enumerate}
  We pick an arbitrary $q_0$-dimensional variable word $w_0=w_0(v_0,...,v_{q_0-1})$ over $k+1$ of length $n$. Clearly, condition (i) above is satisfied while conditions (ii)-(iv) are meaningless. Let us assume that for some positive $j$ we have constructed $w_0,...,w_{j-1}$ satisfying the conditions above. Set
  $d=r^{(k+2)^{q_j-1}}$ and
  observe that
  \begin{equation}
    \label{eq02}
    q_j+d=q_{j-1}+1.
  \end{equation}
  Moreover, for every $t=0,...,d$ we set
  \[
\mathbf{a}_t=(\underbrace{a,...,a}_{t-\mathrm{times}}, \underbrace{b,...,b}_{(d-t)-\mathrm{times}})
\]
and $A=\{\mathbf{a}_t:t=0,...,d\}$.
We define a map $Q$ from $A\times W^{k+1}_v(q_j-1)$ into $W^{k+1}_v(q_{j-1})$ setting for each $t$ in $\{0,...,d\}$ and $(z_i)_{i=0}^{q_j-2}$ in $W^{k+1}_v(q_j-1)$
\[Q(\mathbf{a}_t,(z_i)_{i=0}^{q_j-2})=(z_i)_{i=0}^{j-2}\;^\con\mathbf{a}_t^\con(z_i)_{i=j-1}^{q_j-2},\]
under the convection that $(z_i)_{i=0}^{j-2}$ (resp. $(z_i)_{i=j-1}^{q_j-2}$) is the empty sequence if $j=1$ (resp. $j=m$).
We denote by $\mathcal{X}$ the set of all maps from $W^{k+1}_v(q_j-1)$ into $\{1,...,r\}$. Clearly, $\mathcal{X}$ is of cardinality at most $d$.
For every $t$ in $\{0,...,d\}$, we define $T_t$ in $\mathcal{X}$ setting for every $\mathbf{z}$ in $W^{k+1}_v(q_j-1)$
\[T_t(\mathbf{z})=c(w_{j-1}(Q(\mathbf{a}_t,\mathbf{z}))).\]
Since the cardinality of $\mathcal{X}$ is at most $d$, there exist $t_1,t_2$ in $\{0,...,d\}$ such that $t_1<t_2$ and $T_{t_1}=T_{t_2}$.
Finally, we set
\[w'(v_0,...,v_{q_j-1})=(v_i)_{i=0}^{j-2}\;^\con(\underbrace{a,...,a}_{t_1-\mathrm{times}},\underbrace{v_{j-1},...,v_{j-1}}_{(t_2-t_1)-\mathrm{times}},\underbrace{b,...,b}_{(d-t_2)-\mathrm{times}})^\con(v_i)_{i=j}^{q_j-1},\]
under the convection that $(v_i)_{i=0}^{j-2}$ (resp. $(v_i)_{i=j}^{q_j-1}$) is the empty sequence if $j=1$ (resp. $j=m$),
and $w_j(v_0,...,v_{q_j-1})=w_{j-1}\big(w'(v_0,...,v_{q_j-1})\big)$.
By equation \eqref{eq02}, we have that $w'$ is of length $q_{j-1}$ and therefore $w_j$ is well defined.
It is immediate that $w_j(v_0,...,v_{q_j-1})$ satisfies conditions (i)-(iii). Let
$\mathbf{x}=(x_i)_{i=0}^{q_j-1},\mathbf{y}=(y_i)_{i=0}^{q_j-1}$ in $W^{k+1}_v(q_j)$ as in condition (iv). Define $\mathbf{z}=(z_i)_{i=0}^{q_j-2}$ setting $z_i=x_i$ if $i<j-1$ and $z_i=x_{i+1}$ otherwise. Observe that
\[
  Q(\mathbf{a}_{t_2},\mathbf{z})=w'(\mathbf{x})\;\text{and}\;Q(\mathbf{a}_{t_1},\mathbf{z})=w'(\mathbf{x}).
\]
Therefore,
\[\begin{split}
  c(w_j(\mathbf{x}))&=
            c(w_{j-1}(w'(\mathbf{x})))
            =c(w_{j-1}(Q(\mathbf{a}_{t_2},\mathbf{z})))
            =T_{t_2}(\mathbf{z})
            =T_{t_1}(\mathbf{z})\\
            &=c(w_{j-1}(Q(\mathbf{a}_{t_1},\mathbf{z})))
            =c(w_{j-1}(w'(\mathbf{y})))
            =c(w_j(\mathbf{x}))
\end{split}\]
as desired and the proof of the inductive step of the construction is complete.

  Let us set $w(v_0,...,v_{m-1})=w_m(v_0,...,v_{m-1})$ and observe that $w(v_0,...,v_{m-1})$ is as desired. Indeed, first observe that by condition (ii) of the inductive construction we have that $w$ is reduced by $w_j$ for all $j$ in $\{0,...,m\}$. Moreover, by condition (ii) we have that $\supp_w(v_{j-1})=\supp_{w_j}(v_{j-1})$ for all $j$ in $\{1,...,m\}$. Thus, for every $j$ in $\{1,...,m\}$ and every $\mathbf{x}'=(x'_i)_{i=0}^{m-1},\mathbf{y}'=(y'_i)_{i=0}^{m-1}$ in $W^{k+1}_v(m)$ such that $x'_{j-1}=a$, $y'_{j-1}=b$ and $x'_i=y'_i$ for all $i\neq j-1$, there exist
  $\mathbf{x}=(x_i)_{i=0}^{q_j-1},\mathbf{y}=(y_i)_{i=0}^{q_j-1}$ in $W^{k+1}_v(q_j)$ satisfying:
  \begin{enumerate}
          \item[(a)] $x_{j-1}=a$ and $y_{j-1}=b$
          \item[(b)] $x_i=y_i$ for all $i=0,...,q_j-1$ with $i\neq j-1$
          \item[(c)] $w(\mathbf{x}')=w_{j}(\mathbf{x})$ and $w(\mathbf{y}')=w_{j}(\mathbf{y})$
        \end{enumerate}
   and therefore, by condition (iv) we have that
   \begin{equation}
     \label{eq03}
     c(w(\mathbf{x}'))\stackrel{\text{(c)}}{=}c(w_{j}(\mathbf{x}))\stackrel{\text{(iv)}}{=}c(w_{j}(\mathbf{y}))\stackrel{\text{(c)}}{=}c(w(\mathbf{y}')).
   \end{equation}
  One can easily see that \eqref{eq03} implies that the coloring $c$ in $(a,b)$-insensitive over $w(v_0,...,v_{m-1})$.
  Thus inequality \eqref{eq01} is valid and since $f$ belongs to the class $\mathcal{E}^4$ of Grzegorczyk's hierarchy, the proof of the lemma is complete.
\end{proof}

\section{Proof of Theorem \ref{gr_base_case}}
As we mentioned in the introduction, the proof of Theorem \ref{gr_base_case} is a modification of S. Shelah's proof for the Hales--Jewett Theorem. It proceeds by induction on $k$. For ``$k=1$'' Theorem \ref{gr_base_case} follows readily by the finite version of Hindman's theorem, that is, Theorem \ref{Hindman_fin}. In particular, we have
\begin{equation}
  \label{eq04}
  \mathrm{GR}(1,m,r)=\mathrm{H}(m,r).
\end{equation}
Towards the proof of the inductive step, we, in particular, show the following inequality.
\begin{equation}
  \label{eq05}
  \mathrm{GR}(k+1,m,r)\mik \mathrm{Sh}_v(k,\mathrm{GR}(k,m,r),r).
\end{equation}
Indeed, let us set $M=\mathrm{GR}(k,m,r)$ and pick any integer $n$ with $n\meg\mathrm{Sh}_v(k,M,r)$. Also, let $c$ be an $r$-coloring of $W^{k+1}_v(n)$.
By Lemma \ref{insensitivity}, there exists an $M$-dimensional variable word $w'(v_0,...,v_{M-1})$ over $k+1$ of length $n$ such that the coloring $c$ is $(k,k+1)$-insensitive.
We define an $r$-coloring $c'$ on $W_v^k(M)$ by setting
\[c'(\mathbf{x})=c(w'(\mathbf{x}))\]
for all $\mathbf{x}$ in $W_v^k(M)$. By the definition of $M$, there exists an $m$-dimensional variable word $w''(v_0,...,v_{m-1})$ over $k$ of length $M$ such that the set $W_v^k(w'')$ is $c'$-monochromatic. We set $w(v_0,...,v_{m-1})=w'(w''(v_0,...,v_{m-1}))$. Clearly, $w$ is reduced by $w'$ and therefore, since $c$ is $(k,k+1)$-insensitive over $w'$, we have that $c$ is $(k,k+1)$-insensitive over $w$ too. Moreover, by the definition of $c'$ and the choice of $w''$, we have that the set $\{w(\mathbf{x}):\mathbf{x}\in W_v^k(m)\}$ is $c$-monochromatic. Invoking the insensitivity of the coloring $c$ over $w$ we have that $W^{k+1}_v(w)$ is monochromatic as desired.

Finally, by \eqref{eq04},\eqref{eq05} and the fact that both the numbers $\mathrm{H}(m,r)$ and  $\mathrm{Sh}_v(k,m,r)$ are upper bounded by a primitive recursive function belonging to the class $\mathcal{E}^4$ of Grzegorczyk's hierarchy, we have that the numbers $\mathrm{GR}(k,m,r)$ are upper bounded by a primitive recursive function belonging to the class $\mathcal{E}^5$.


\end{document}